\theoremstyle{plain}
\newtheorem{theorem}{Theorem}[section]
\newtheorem{corollary}[theorem]{Corollary}
\newtheorem{proposition}[theorem]{Proposition}
\theoremstyle{definition} 
\newtheorem{definition}[theorem]{Definition}
\theoremstyle{remark}
\newtheorem{equa}[theorem]{Equality}
\newtheorem{exercise}[theorem]{Exercise}
\renewcommand{\phi}{\varphi}
\newcommand{\initial}\lessdot
\def\?{?\vadjust

{\vbox to 0pt{\vskip-7pt\hbox to 1.1\hsize{\hfill\huge ?!}}}}
\newcommand{\be}{\begin{enumerate}}
\newcommand{\ee}{\end{enumerate}}
\renewcommand{\epsilon}{\varepsilon}
 \def\nfork{\setbox0\hbox{$\bigcup$}%
 \setbox1=\hbox to \wd0{\hfil\vrule width 0.7pt depth 2pt height 7.5pt\hfil}%
 \wd1=0cm\relax\box1\box0}
\begin{document}

\title{Duality and Hereditary K\"onig-Egerv\'ary Set-systems}

\author{Adi Jarden}
\email[Adi Jarden]{jardena@ariel.ac.il}
\address{Department of Mathematics.\\ Ariel University \\ Ariel, Israel}

\maketitle

\today

\begin{abstract}
A K\"onig-Egerv\'ary graph is a graph $G$ satisfying $\alpha(G)+\mu(G)=|V(G)|$, where $\alpha(G)$ is the cardinality of a maximum independent set and $\mu(G)$ is the matching number of $G$. Such graphs are those that admit a matching between $V(G)-\bigcup \Gamma$ and $\bigcap \Gamma$ where $\Gamma$ is a set-system comprised of maximum independent sets satisfying $|\bigcup \Gamma'|+|\bigcap \Gamma'|=2\alpha(G)$ for every set-system $\Gamma' \subseteq \Gamma$; in order to improve this characterization of a K\"onig-Egerv\'ary graph, we characterize \emph{hereditary K\"onig-Egerv\'ary set-systems} (HKE set-systems, here after).

An \emph{HKE} set-system is a set-system, $F$, such that for some positive integer, $\alpha$, the equality
$|\bigcup \Gamma|+|\bigcap \Gamma|=2\alpha$ holds for every non-empty subset, $\Gamma$, of $F$.

We prove the following theorem:
Let $F$ be a set-system. $F$ is an HKE set-system if and only if the equality $|\bigcap \Gamma_1-\bigcup \Gamma_2|=|\bigcap \Gamma_2-\bigcup \Gamma_1|$ holds for every two non-empty disjoint subsets, $\Gamma_1,\Gamma_2$ of $F$.

This theorem is applied in \cite{hke},\cite{broken}.
\end{abstract}

\section{Introduction}
In this section we give the basic definitions and motivate the study of HKE set-systems.

For a uniform set-system, $F$, we denote by $\alpha(F)$ the cardinality of a set in $F$. We write $\alpha$, when $F$ is clear from the context.

The following definition contradicts the definition of a K\"onig-Egerv\'ary set-system in \cite{dam}.
\begin{definition}
Let $F$ be a uniform set-system. $F$ is said to be a \emph{K\"onig-Egerv\'ary set-system} (KE set-system in short), if the following equality holds: $$|\bigcup F|+|\bigcap F|=2\alpha(F).$$
\end{definition}

\begin{definition}\label{definition of a KE set-system}
An \emph{HKE} set-system is a set-system, $F$, such that for some positive integer, $\alpha$, the equality
$$|\bigcup \Gamma|+|\bigcap \Gamma|=2\alpha$$ holds for every non-empty subset, $\Gamma$, of $F$.
\end{definition}

\begin{proposition}\label{1}
Every HKE set-system is a uniform set-system. So a set-system $F$ is HKE if and only if each subset $\Gamma$ of $F$ is KE. 
\end{proposition}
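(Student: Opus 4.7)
The plan is entirely routine and proceeds by specializing the HKE condition to singletons.

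First, I would show uniformity. Fix any $A \in F$ and apply the HKE equality to $\Gamma := \{A\}$. Then $\bigcup \Gamma = \bigcap \Gamma = A$, so the HKE equality $|\bigcup \Gamma| + |\bigcap \Gamma| = 2\alpha$ reduces to $2|A| = 2\alpha$, hence $|A| = \alpha$. Since $A$ was arbitrary, every element of $F$ has cardinality $\alpha$, so $F$ is uniform with $\alpha(F) = \alpha$.

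For the ``if and only if'' statement, uniformity propagates to every non-empty $\Gamma \subseteq F$: the elements of $\Gamma$ are elements of $F$, hence all of cardinality $\alpha$, so $\Gamma$ is itself a uniform set-system with $\alpha(\Gamma) = \alpha$. If $F$ is HKE, the defining equality for $\Gamma$ reads $|\bigcup \Gamma| + |\bigcap \Gamma| = 2\alpha = 2\alpha(\Gamma)$, which is exactly the KE condition for $\Gamma$. Conversely, if every non-empty $\Gamma \subseteq F$ is KE, then in particular every doubleton $\{A,B\} \subseteq F$ is uniform, forcing $|A| = |B|$; so $F$ is uniform with some common value $\alpha$, and then the KE condition on each $\Gamma$ gives $|\bigcup \Gamma| + |\bigcap \Gamma| = 2\alpha(\Gamma) = 2\alpha$, witnessing that $F$ is HKE with this $\alpha$.

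There is no real obstacle here: the only subtlety worth flagging is that in the converse direction one cannot read off $\alpha$ from a single $\Gamma$ and must use at least pairs (or singletons together) to ensure a uniform $\alpha$ across all of $F$, which is handled automatically because ``each subset'' includes all doubletons.
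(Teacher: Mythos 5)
Your proof is correct and uses the same key step as the paper: specializing the HKE equality to a singleton $\Gamma=\{A\}$ to get $|A|=\alpha$. You go on to spell out both directions of the ``if and only if'' (including the point that doubletons being KE forces a common cardinality), which the paper leaves implicit after establishing uniformity; this is a harmless elaboration, not a different argument.
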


\begin{proof}
Let $F$ be an HKE set-system and let $A \in F$. By Definition \ref{definition of a KE set-system}, where we substitute $\Gamma=\{A\}$, we have $|A|=\alpha$. So $F$ is a uniform set-system and $\alpha=\alpha(F)$.
\end{proof}

\begin{proposition}\label{2}
Let $F$ be a uniform set-system. If $|F| \leq 2$ then it is an HKE set-system.
\end{proposition}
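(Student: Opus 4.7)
The plan is to verify the defining equality directly for every non-empty subset $\Gamma$ of $F$, taking $\alpha = \alpha(F)$ (the common cardinality guaranteed by uniformity). Since $|F| \leq 2$, the non-empty subsets of $F$ are of only two possible shapes: singletons $\{A\}$ with $A \in F$, and (when $|F| = 2$) the full system $F$ itself. So there are essentially just two cases to handle.

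For the singleton case $\Gamma = \{A\}$, we have $\bigcup \Gamma = \bigcap \Gamma = A$, hence
\[
|\bigcup \Gamma| + |\bigcap \Gamma| = 2|A| = 2\alpha,
\]
using that $F$ is uniform of cardinality $\alpha$.

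For the case $\Gamma = F = \{A, B\}$ with $|A| = |B| = \alpha$, I would invoke the standard inclusion–exclusion identity
\[
|A \cup B| + |A \cap B| = |A| + |B| = 2\alpha,
\]
which gives $|\bigcup \Gamma| + |\bigcap \Gamma| = 2\alpha$ directly. If $|F| < 2$, this case does not occur, and if $F$ is empty the condition is vacuous.

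There is no real obstacle here; the proof is essentially a case split on $|\Gamma|$ combined with the two-set inclusion–exclusion formula. The point of the proposition seems to be to establish the small base case which will presumably be used (either directly or inductively) in the characterization theorem stated in the abstract.
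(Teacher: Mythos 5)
Your proof is correct and follows essentially the same route as the paper: the singleton case is immediate from uniformity, and the two-element case is the inclusion--exclusion identity $|A\cup B|+|A\cap B|=|A|+|B|=2\alpha(F)$. The paper merely compresses the singleton case into a ``without loss of generality $\Gamma=F$'' remark, which you spell out explicitly.
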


\begin{proof}
It is clear when $|F|=1$. So assume $|F|=2$,
 $F=\{A,B\}$. Take a non-empty sub-set-system $\Gamma$ of $F$. Without loss of generality, $\Gamma=F$. So $$|\bigcup \Gamma|+|\bigcap \Gamma|=|A \cup B|+|A \cap B|=|A|+|B|=2\alpha(F).$$
\end{proof}

Theorem \ref{the main theorem of dam} and Propositions \ref{the first on 2016},\ref{proposition 1.7} exemplifies the usefullness of HKE set-systems in the study of K\"onig-Egerv\'ary graphs.

The following theorem is a restatement of \cite[Theorem 2.6]{dam} in our notation.
\begin{theorem}\label{the main theorem of dam}
$G$ is a K\"onig-Egerv\'ary graph if and only if there is a matching between $V(G)-\bigcup \Gamma$ and $\bigcap \Gamma$, where $\Gamma$ is an HKE set-system comprised of maximum independent sets.
\end{theorem}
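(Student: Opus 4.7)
For the forward direction, I would take $\Gamma=\{I\}$ for any maximum independent set $I$; by Proposition \ref{2} this $\Gamma$ is HKE, and $\bigcup\Gamma=\bigcap\Gamma=I$. The required matching between $V(G)-I$ and $I$ is then supplied by any maximum matching $M$ of $G$: indeed $|M|=\mu(G)=|V(G)|-\alpha(G)=|V(G)-I|$, and since $I$ is independent each edge of $M$ has at most one endpoint in $I$, so cardinality forces every edge of $M$ to have exactly one endpoint in $V(G)-I$ and one in $I$.

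For the reverse direction, suppose an HKE family $\Gamma$ of maximum independent sets together with a matching $M_0$ saturating $V(G)-\bigcup\Gamma$ into $\bigcap\Gamma$ are given. My plan is to establish the intermediate claim that the induced subgraph $G[\bigcup\Gamma-\bigcap\Gamma]$ admits a perfect matching $M_1$. Once this is available, $M_0$ and $M_1$ have disjoint vertex sets (since $M_0$ lives in $(V(G)-\bigcup\Gamma)\cup\bigcap\Gamma$), so their union is a matching of size
\[
|V(G)-\bigcup\Gamma|+\tfrac{1}{2}|\bigcup\Gamma-\bigcap\Gamma|=|V(G)|-|\bigcup\Gamma|+\alpha(G)-|\bigcap\Gamma|=|V(G)|-\alpha(G),
\]
where the last equality uses the HKE identity $|\bigcup\Gamma|+|\bigcap\Gamma|=2\alpha(G)$. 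This lower bound on $\mu(G)$, combined with the standard upper bound $\mu(G)\le|V(G)|-\alpha(G)$ (via K\"onig's inequality $\mu\le\tau$ and Gallai's identity $\alpha+\tau=|V|$), yields $\alpha(G)+\mu(G)=|V(G)|$, so $G$ is K\"onig-Egerv\'ary.

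The key remaining step is the construction of $M_1$. The base case $|\Gamma|=2$, say $\Gamma=\{I_1,I_2\}$, is the classical exchange lemma: a perfect matching in $G$ between $I_1-I_2$ and $I_2-I_1$ exists because any Hall violator $S\subseteq I_1-I_2$ with $|N(S)|<|S|$ yields the strictly larger independent set $(I_2-N(S))\cup S$, contradicting maximality of $I_2$. For general $\Gamma$ I would induct on $|\Gamma|$, invoking the hereditary HKE hypothesis on sub-families of smaller size. The main obstacle is the inductive step: when several members of $\Gamma$ are involved simultaneously, a hypothetical Hall violator can straddle them and so need not be independent in $G$, which defeats the naive two-set exchange. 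Overcoming this should require applying the hereditary HKE condition to the sub-family consisting of exactly those members of $\Gamma$ that meet a hypothetical violator, thereby reducing back to the two-set exchange; this is precisely the step where the hereditary strength of HKE, rather than merely the top-level equation defining a KE set-system, is essential.
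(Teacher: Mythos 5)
The paper offers no internal proof of this statement at all: it is explicitly imported as a restatement of \cite[Theorem 2.6]{dam}, so the only comparison available is against what that cited result must contain. Your forward direction is correct and complete: $\{I\}$ is HKE by Proposition \ref{2}, and the counting argument showing that a maximum matching of a K\"onig-Egerv\'ary graph pairs $V(G)-I$ bijectively with a subset of $I$ is sound. Your reduction of the reverse direction to the existence of a perfect matching $M_1$ of $G[\bigcup\Gamma-\bigcap\Gamma]$ is also correctly set up, as is the arithmetic $|M_0\cup M_1|=|V(G)|-\alpha(G)$ via $|\bigcup\Gamma|+|\bigcap\Gamma|=2\alpha(G)$ together with the upper bound $\mu(G)\le |V(G)|-\alpha(G)$.

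The genuine gap is exactly where you flag it: the existence of $M_1$ when $|\Gamma|\ge 3$. What you actually prove is only the case $|\Gamma|=2$, the classical exchange lemma, which holds for \emph{any} two maximum independent sets and uses no K\"onig-Egerv\'ary hypothesis at all; the inductive step, which is the only place the hereditary condition can do any work, is left as a stated intention rather than an argument. The difficulty is not cosmetic: for $|\Gamma|\ge 3$ the set $\bigcup\Gamma-\bigcap\Gamma$ carries no canonical bipartition, so it is not even clear in which bipartite graph your hypothetical Hall violator is supposed to live. Equality \ref{equality 1} does pair the regions $\bigcap\Gamma_1-\bigcup\Gamma_2$ and $\bigcap\Gamma_2-\bigcup\Gamma_1$ as equinumerous, but neither region has the form $A-B$ for a single pair $A,B\in\Gamma$, so the two-set exchange does not directly supply edges of $G$ between them, and an actual construction (or Hall-type verification) is needed. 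That perfect-matching lemma is precisely the substance of the result cited from \cite{dam}; as written, your reverse direction assumes it rather than proves it.
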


\begin{proposition}\label{the first on 2016}
Let $G$ be a KE graph. Then $\Omega(G)$ is an HKE set-system.
\end{proposition}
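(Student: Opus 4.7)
My plan is to exploit the interaction between a maximum matching and the maximum independent sets of a K\"onig--Egerv\'ary graph. Since $G$ is KE, $|V(G)|=\alpha(G)+\mu(G)$. Fix a maximum matching $M$ of $G$. For any $S\in\Omega(G)$, the set $V(G)\setminus S$ is a vertex cover of size $\mu(G)$, hence a \emph{minimum} vertex cover; since the $\mu(G)$ pairwise disjoint edges of $M$ must each be covered by some vertex of $V(G)\setminus S$, each edge of $M$ has exactly one endpoint in $S$ and exactly one endpoint in $V(G)\setminus S$, and in particular $M$ saturates $V(G)\setminus S$. An immediate consequence is that every vertex of $V(G)$ not saturated by $M$ lies in every $S\in\Omega(G)$; there are exactly $\alpha(G)-\mu(G)$ such vertices.

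With this structural picture, the rest is bookkeeping. Fix a nonempty $\Gamma\subseteq\Omega(G)$. For each edge $e=\{u,v\}\in M$ and each $S\in\Gamma$, exactly one of $u,v$ belongs to $S$. Call $e$ \emph{constant} with respect to $\Gamma$ if the same endpoint of $e$ lies in every $S\in\Gamma$, and \emph{mixed} otherwise. If $e$ is constant, one endpoint lies in $\bigcap\Gamma$ while the other lies in neither $\bigcap\Gamma$ nor $\bigcup\Gamma$; if $e$ is mixed, both endpoints lie in $\bigcup\Gamma$ and neither lies in $\bigcap\Gamma$. Together with the $\alpha(G)-\mu(G)$ unsaturated vertices (all of which lie in both $\bigcap\Gamma$ and $\bigcup\Gamma$), and writing $c$ and $m$ for the numbers of constant and mixed edges (so $c+m=\mu(G)$), one obtains
\[
\bigl|\bigcap\Gamma\bigr| = (\alpha(G)-\mu(G))+c \qquad\text{and}\qquad \bigl|\bigcup\Gamma\bigr| = (\alpha(G)-\mu(G))+c+2m.
\]
Adding, $|\bigcap\Gamma|+|\bigcup\Gamma| = 2(\alpha(G)-\mu(G))+2(c+m) = 2\alpha(G)$, which is exactly the HKE identity, with the value $\alpha$ of Definition~\ref{definition of a KE set-system} being $\alpha(G)$.

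The main obstacle is the structural fact in the first paragraph: a maximum matching must saturate the complement of every maximum independent set. This is where the K\"onig--Egerv\'ary hypothesis is genuinely used; once it is established, the partition of $V(G)$ into $V(G)\setminus V(M)$ together with the matched pairs turns the classification \emph{constant}/\emph{mixed} into an exhaustive case analysis that handles every nonempty $\Gamma\subseteq\Omega(G)$ uniformly, so no separate induction on $|\Gamma|$ is needed.
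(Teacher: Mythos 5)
Your proof is correct, and it is genuinely different from what the paper does: the paper's entire proof of this proposition is a citation to \cite{jlm} (Theorem 3.6 and Corollary 2.8 there), i.e.\ it leans on the previously established structure theory of unions and intersections of maximum independent sets in KE graphs, whereas you give a self-contained elementary argument. Your key observations all check out: since $\tau(G)=|V(G)|-\alpha(G)=\mu(G)$, the complement of any $S\in\Omega(G)$ has exactly $\mu(G)$ vertices, and the $\mu(G)$ disjoint edges of a maximum matching $M$ each need a distinct covering vertex there, forcing each edge of $M$ to meet $S$ in exactly one endpoint and forcing $M$ to saturate $V(G)\setminus S$; hence the $\alpha(G)-\mu(G)$ unsaturated vertices lie in every maximum independent set. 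The constant/mixed classification of matched edges is exhaustive, the contributions to $|\bigcap\Gamma|$ and $|\bigcup\Gamma|$ are tallied correctly, and the sum collapses to $2\alpha(G)$ for every nonempty $\Gamma\subseteq\Omega(G)$ at once, which is exactly Definition \ref{definition of a KE set-system}. What the paper's route buys is brevity and consistency with the cited literature; what yours buys is that the proposition no longer depends on an external reference, and in effect you have reproved the union--intersection identity of \cite{jlm} from first principles. If you wanted to polish it further, you could remark explicitly that an edge of $M$ cannot have both endpoints in $S$ because $S$ is independent (so ``exactly one endpoint in $S$'' really does follow from ``exactly one endpoint in $V(G)\setminus S$''), but this is cosmetic.
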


\begin{proof}
By \cite[Theorem 3.6]{jlm} and \cite[Corollary 2.8]{jlm}.
\end{proof}

\begin{proposition}\label{proposition 1.7}
Every KE set-system that is comprised of maximum independent sets of some graph is an HKE set-system.  
\end{proposition}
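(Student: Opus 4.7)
The plan is to establish a \emph{Key Lemma}: for every non-empty family $\Gamma$ of maximum independent sets of a graph $G$,
$$|\bigcup \Gamma|+|\bigcap \Gamma|\geq 2\alpha(G).$$
Given this lower bound, the proposition will follow from a short monotonicity argument.

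I would prove the Key Lemma by induction on $|\Gamma|$. The case $|\Gamma|=1$ is immediate. For $|\Gamma|\geq 2$, fix $A\in \Gamma$, write $\Gamma'=\Gamma\setminus\{A\}$, set $U'=\bigcup \Gamma'$ and $J=\bigcap \Gamma'$, and put $u=|A\setminus U'|$, $j=|J\setminus A|$. A direct count gives
$$|\bigcup \Gamma|+|\bigcap \Gamma|=|U'|+|J|+u-j,$$
so by the inductive hypothesis it suffices to show $u\geq j$. For this I would exhibit the independent set
$$B=(A\cap U')\cup (J\setminus A).$$
Both pieces are contained in maximum independent sets ($A\cap U'\subseteq A$, and $J\setminus A\subseteq A''$ for any $A''\in \Gamma'$); for the cross edges, any $x\in A\cap U'$ lies in some $A''\in \Gamma'$, and $J\setminus A\subseteq A''$ too, so $x$ is non-adjacent to every $y\in J\setminus A$. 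Since $|B|=\alpha-u+j\leq \alpha$, we obtain $j\leq u$.

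To conclude, take a non-empty $\Gamma\subseteq F$ and enlarge it to $F$ by adding the elements of $F\setminus \Gamma$ one at a time. Each single addition of a set $A$ to a current non-empty family $\Gamma^*$ shifts $|\bigcup \Gamma^*|+|\bigcap \Gamma^*|$ by $|A\setminus \bigcup \Gamma^*|-|\bigcap \Gamma^*\setminus A|$, which is non-negative by the $u\geq j$ estimate from the Key Lemma applied to $\Gamma^*$. Therefore
$$|\bigcup \Gamma|+|\bigcap \Gamma|\leq |\bigcup F|+|\bigcap F|=2\alpha,$$
and combined with the Key Lemma this forces equality, so $\Gamma$ is KE. The main obstacle is the construction and independence verification of $B$ in the Key Lemma; this is precisely where the hypothesis that all members of $F$ are maximum independent sets of a common graph enters essentially. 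The remainder is linear bookkeeping of cardinalities.
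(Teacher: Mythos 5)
Your argument is correct, and it is genuinely different from what the paper does: the paper disposes of this proposition by citing two corollaries of \cite{jlm}, whereas you give a self-contained proof. Your Key Lemma --- that $|\bigcup \Gamma|+|\bigcap \Gamma|\geq 2\alpha(G)$ for any non-empty collection $\Gamma$ of maximum independent sets --- is essentially the content of the cited corollaries (it is a known consequence of the Levit--Mandrescu ``Set and Collection Lemma''), and your proof of it is sound: the set $B=(A\cap U')\cup (J\setminus A)$ is indeed independent, since each piece sits inside a maximum independent set and any cross pair $x\in A\cap U'$, $y\in J\setminus A$ lies together in some $A''\in\Gamma'$; the count $|B|=\alpha-u+j\leq\alpha$ then gives $u\geq j$. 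The concluding monotonicity step is also right, and in fact it subsumes the Key Lemma: the single-step estimate $|A\setminus\bigcup\Gamma^*|\geq|\bigcap\Gamma^*\setminus A|$ shows that $|\bigcup\Gamma^*|+|\bigcap\Gamma^*|$ is non-decreasing along any chain of non-empty subfamilies, so starting from a singleton (value $2\alpha$) and passing through $\Gamma$ up to $F$ (value $2\alpha$ by the KE hypothesis) forces the value $2\alpha$ at $\Gamma$, with no separate induction needed. What your route buys is transparency and independence from the literature; what the paper's route buys is brevity and the reuse of results (matchings between $\bigcup\Gamma\setminus S$ and $S\setminus\bigcap\Gamma$) that are needed elsewhere anyway. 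You do use essentially the full strength of the hypothesis exactly where you say you do --- the construction of $B$ is where ``maximum independent sets of a common graph'' enters, and the statement is false for general uniform KE set-systems --- so there is no gap.
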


\begin{proof}
By \cite[Corollaries 2.7 and 2.9]{jlm}.
\end{proof}

\section{HKE set-systems and duality}
In this section, we characterize the HKE set-systems; consequently, we get a new characterization of a K\"onig-Egerv\'ary graph. Proposition \ref{the theorem before 23.3.16} is a weak version of Theorem \ref{equivalent formulations of a KE set-system}, where we add the assumption, that the set-system is uniform.
 
In order to state Proposition \ref{the theorem before 23.3.16}, Theorem \ref{equivalent formulations of a KE set-system} and Corollary \ref{corollary 2.6}, we present the following equality:
\begin{equa}\label{equality 1}
$$|\bigcap \Gamma_1-\bigcup \Gamma_2|=|\bigcap \Gamma_2-\bigcup \Gamma_1|.$$
\end{equa}

\begin{proposition}\label{the theorem before 23.3.16} 
Let $F$ be a uniform set-system. 

 The following are equivalent:
\begin{enumerate}
\item $F$ is an HKE set-system. \item Equality \ref{equality 1} holds for every two non-empty disjoint sub-set-systems, $\Gamma_1,\Gamma_2$ of $F$, \item Equality \ref{equality 1} holds for every two non-empty disjoint sub-set-systems, $\Gamma_1,\Gamma_2$ of $F$ with $\Gamma_1 \cup \Gamma_2=F$.
\end{enumerate}
\end{proposition}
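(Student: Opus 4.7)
The implication (2)$\Rightarrow$(3) is immediate, since (3) is the special case of (2) obtained by requiring $\Gamma_1 \cup \Gamma_2 = F$. The substantive work lies in (1)$\Rightarrow$(2) and (3)$\Rightarrow$(1).

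For (1)$\Rightarrow$(2), I would expand by inclusion-exclusion on $\Gamma_2$:
$$|\bigcap \Gamma_1 - \bigcup \Gamma_2| = \sum_{T \subseteq \Gamma_2}(-1)^{|T|}|\bigcap (\Gamma_1 \cup T)|,$$
where disjointness $\Gamma_1 \cap \Gamma_2 = \emptyset$ is used implicitly. Since $\Gamma_1 \neq \emptyset$, each $\Gamma_1 \cup T$ is non-empty, so HKE lets me substitute $|\bigcap(\Gamma_1 \cup T)| = 2\alpha - |\bigcup(\Gamma_1 \cup T)|$; the $2\alpha$ contribution then vanishes because $\sum_{T \subseteq \Gamma_2}(-1)^{|T|} = 0$ (critically using $\Gamma_2 \neq \emptyset$). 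The remaining sum $\sum_{T \subseteq \Gamma_2}(-1)^{|T|+1}|\bigcup(\Gamma_1 \cup T)|$ is then rewritten by a dual inclusion-exclusion expansion as $|\bigcap \Gamma_2 - \bigcup \Gamma_1|$, yielding Equality \ref{equality 1}.

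For (3)$\Rightarrow$(1), my plan is first to translate (3) into a transparent combinatorial symmetry. For $x \in \bigcup F$ set $I(x) = \{A \in F : x \in A\}$, and for $S \subseteq F$ let $N(S) = |\{x : I(x) = S\}|$. When $F = \Gamma_1 \sqcup \Gamma_2$ is a partition into non-empty pieces, the only $x$ contributing to $|\bigcap \Gamma_1 - \bigcup \Gamma_2|$ are those with $I(x) = \Gamma_1$ exactly (forced by $\Gamma_1 \subseteq I(x) \subseteq F - \Gamma_2 = \Gamma_1$), and symmetrically on the right; so (3) is equivalent to the symmetry $N(T) = N(F - T)$ for every proper non-empty $T \subset F$. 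Given this, for any proper non-empty $\Gamma \subset F$ the complement involution $S \leftrightarrow F - S$ on $\{S : \Gamma \subseteq S \subsetneq F\}$ yields
$$|\bigcap \Gamma| = N(F) + \sum_{\emptyset \neq T \subseteq F - \Gamma}N(T) = N(F) + |\bigcup F| - |\bigcup \Gamma|,$$
so $|\bigcap \Gamma| + |\bigcup \Gamma| = N(F) + |\bigcup F|$ is independent of $\Gamma$. Evaluating at a singleton $\Gamma = \{A\}$ (proper when $|F| \geq 2$) identifies this constant as $2\alpha$ via uniformity; the case $\Gamma = F$ then follows from $|\bigcap F| = N(F)$ and the same identification. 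The cases $|F| \leq 2$ are subsumed by Proposition \ref{2}.

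The main obstacle I anticipate is (1)$\Rightarrow$(2): applying HKE only to $\Gamma_1$, $\Gamma_2$, and $\Gamma_1 \cup \Gamma_2$ yields the \emph{wrong} symmetry $|\bigcap \Gamma_1 - \bigcap \Gamma_2| = |\bigcup \Gamma_2 - \bigcup \Gamma_1|$. Recovering Equality \ref{equality 1} requires invoking HKE on every intermediate subsystem $\Gamma_1 \cup T$ with $T \subseteq \Gamma_2$, and that is exactly what the inclusion-exclusion expansion makes available.
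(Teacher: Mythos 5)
Your proof is correct, but it reaches the result by a genuinely different route than the paper. For $(1)\Rightarrow(2)$ the paper argues by induction on $|\Gamma_1|$: it peels off one set $A^*$ at a time via the telescoping identity $|\bigcap \Gamma_1-\bigcup \Gamma_2|=|\bigcap (\Gamma_1-\{A^*\})-\bigcup \Gamma_2|-|\bigcap (\Gamma_1-\{A^*\})-\bigcup (\Gamma_2 \cup \{A^*\})|$ and applies the induction hypothesis to each summand, with the base case $|\Gamma_1|=1$ handled by the modularity computation $|\bigcup \Gamma_2 \cup A^*|+|\bigcap \Gamma_2 \cap A^*|=|\bigcup \Gamma_2|+|\bigcap \Gamma_2|=2\alpha$. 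Your inclusion--exclusion expansion over $T \subseteq \Gamma_2$, followed by killing the $2\alpha$ terms with $\sum_{T}(-1)^{|T|}=0$ and converting back via complementation, is a closed-form, non-inductive version of the same fact; it has the virtue of displaying exactly which subsystems ($\Gamma_1 \cup T$) the HKE hypothesis is invoked on, whereas the paper's recursion distributes that bookkeeping across the induction. (You should, however, verify the ``dual expansion'' step explicitly --- e.g.\ by complementing inside $\bigcup(\Gamma_1\cup\Gamma_2)$ or by counting the contribution of each element --- since as stated it is only asserted.) For the converse, the paper proves $(3)\Rightarrow(2)$ and $(2)\Rightarrow(1)$ separately, in both cases by decomposing the relevant sets into the atoms $\{x: I(x)=S\}$ exactly as you do; your reformulation of $(3)$ as the symmetry $N(T)=N(F-T)$ packages those two steps into a single direct implication $(3)\Rightarrow(1)$, which shortens the cycle and isolates where uniformity enters (evaluation at a singleton), at the cost of needing the small-$|F|$ cases handled separately via Proposition \ref{2}. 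One cosmetic point: in your closing remark the ``wrong symmetry'' should read $|\bigcap \Gamma_1|-|\bigcap \Gamma_2|=|\bigcup \Gamma_2|-|\bigcup \Gamma_1|$ (a difference of cardinalities, not the cardinality of a set difference); this does not affect the proof itself.
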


The argument of Proposition \ref{the theorem before 23.3.16} is based on the following exercise:
\begin{exercise}\label{exercise 1}
Assume that $\{A,B,C,D\}$ is an HKE set-system (so in particular $\{A,B,C\}$ is an HKE set-system).
Prove:
\begin{enumerate}
\item $|A-B-C|=|B \cap C-A|$. A clue: $A-B-C=(A \cup B \cup C)-(B \cup C)$ and $B \cap C-A=(B \cap C)-(A \cap B \cap C)$. \item $|A \cap B-C-D|=|C \cap D-A-B|$. A clue: $A \cap B -C-D=(A-C-D)-(A-B-C-D)$. Apply Clause (1).
\end{enumerate}
\end{exercise}

We now prove Proposition \ref{the theorem before 23.3.16}.
\begin{proof}
$(1) \Rightarrow (2):$
We prove it by induction on $r=:|\Gamma_1|$. 

\emph{Case a:} $r=1$, so $\Gamma_1=\{A^*\}$ for some set $A^*$. In this case, we apply the idea of Exercise \ref{exercise 1}(1).

We should prove that $$|A^*-\bigcup \Gamma_2|=|\bigcap \Gamma_2-A^*|,$$ 

namely, $$|\bigcup \Gamma_2 \cup A^*|-|\bigcup \Gamma_2|=|\bigcap \Gamma_2|-|\bigcap \Gamma_2 \cap A^*|,$$
or equivalently,
$$|\bigcup \Gamma_2 \cup A^*|+|\bigcap \Gamma_2 \cap A^*|=|\bigcap \Gamma_2|+|\bigcup \Gamma_2|.$$
 But by Clause $(1)$, each side of this equality equals $2\alpha$.

\emph{Case a:} $r>1$. In this case, we apply the idea of Exercise \ref{exercise 1}(2). We fix $A^* \in \Gamma_1$. First we write three trivial equalities, for convenience: $$\bigcap (\Gamma_1-\{A^*\})=\{x:x \in A \text{ for every } A \in \Gamma_1 \text{ with } A \neq A^*\},$$ $$\bigcup (\Gamma_1-\{A^*\})=\{x:x \in A \text{ for some } A \in \Gamma_1 \text{ with } A \neq A^*\}$$ and $$\bigcap (\Gamma_1 \cup \{A^*\})=A^* \cap \bigcap \Gamma_1.$$ 

We now begin the computation.

$$|\bigcap \Gamma_1-\bigcup \Gamma_2|=|\bigcap (\Gamma_1-\{A^*\})-\bigcup \Gamma_2|-|\bigcap (\Gamma_1-\{A^*\})-\bigcup (\Gamma_2 \cup \{A^*\})|.$$

The right side of this equality is a subtraction of two summands. Since $|\Gamma_1-\{A^*\}|<|\Gamma_1|$, we may apply the induction hypothesis on each summand: 
	
	$$|\bigcap (\Gamma_1-\{A^*\})-\bigcup \Gamma_2|=|\bigcap \Gamma_2-\bigcup (\Gamma_1-\{A^*\})|$$ and
	
	$$|\bigcap (\Gamma_1-\{A^*\})-\bigcup (\Gamma_2 \cup \{A^*\})|=|\bigcap (\Gamma_2 \cup \{A^*\})-\bigcup (\Gamma_1-\{A^*\})|.$$
	
By the three last equalities we get: $$|\bigcap \Gamma_1-\bigcup \Gamma_2|=|\bigcap \Gamma_2-\bigcup (\Gamma_1-\{A^*\})|-|\bigcap (\Gamma_2 \cup \{A^*\})-\bigcup (\Gamma_1-\{A^*\})|.$$

So $$|\bigcap \Gamma_1-\bigcup \Gamma_2|=|\bigcap \Gamma_2-\bigcup \Gamma_1|.$$

Equality \ref{equality 1} is proved, so Clause $(2)$ is proved.

$(2) \Rightarrow (1):$
Let $\Gamma$ be a non-empty subset of $F$.
Fix $D \in \Gamma$. Since $F$ is a uniform set-system, $|D|=\alpha$ (this is the unique place where we use the assumption that $F$ is a uniform set-system, but we eliminate this assumption later). Therefore it is enough to prove that
$$|\bigcup \Gamma|+|\bigcap \Gamma|=2|D|,$$
or equivalently, $$|\bigcup \Gamma-D|=|D-\bigcap \Gamma|.$$
Let $H$ be the set of ordered pairs $\langle \Gamma_1,\Gamma_2 \rangle$ of non-empty disjoint subsets of $\Gamma$ such that $\Gamma_1 \cup \Gamma_2=\Gamma$ and $D \in \Gamma_2\}$.

By Clause (2), $$\sum_{\langle \Gamma_1,\Gamma_2 \rangle \in H} |\bigcap \Gamma_1-\bigcup \Gamma_2|=\sum_{\langle \Gamma_1,\Gamma_2 \rangle \in H} |\bigcap \Gamma_2-\bigcup \Gamma_1|.$$

So it is enough to prove the following two equalities:
$$|\bigcup \Gamma-D|=\sum_{\langle \Gamma_1,\Gamma_2 \rangle \in H} |\bigcap \Gamma_1-\bigcup \Gamma_2|$$
and
$$|D-\bigcap \Gamma|=\sum_{\langle \Gamma_1,\Gamma_2 \rangle \in H} |\bigcap \Gamma_2-\bigcup \Gamma_1|.$$

Since their proofs are dual, we prove the first equality only.

$$\bigcup \Gamma-D=\bigcup_{\langle \Gamma_1,\Gamma_2 \rangle \in H} (\bigcap \Gamma_1-\bigcup \Gamma_2),$$

(on the one hand, if $x \in \bigcup \Gamma-D$ then for $\Gamma_1=\{A \in \Gamma:x \in A\}$ and $\Gamma_2=\{A \in \Gamma:x \notin A\}$ we have $x \in \bigcap \Gamma_1-\bigcup \Gamma_2$ and $\langle \Gamma_1,\Gamma_2 \rangle \in H$. On the other hand, assume that $x \in \bigcap \Gamma_1-\bigcup \Gamma_2$ for some $\langle \Gamma_1,\Gamma_2 \rangle \in H$. Then $x \in \bigcup \Gamma$ (because $x \in \bigcap \Gamma_1$ and $\emptyset \neq \Gamma_1 \subseteq \Gamma$) and $x \notin D$ (because $x \notin \bigcup \Gamma_2$ and $D \subseteq \bigcup \Gamma_2$). So $x 
\in \bigcup \Gamma-D$). Therefore

$$|\bigcup \Gamma-D|=\sum_{\langle \Gamma_1,\Gamma_2 \rangle \in H} |\bigcap \Gamma_1-\bigcup \Gamma_2|,$$
because this is a sum of cardinalities of disjoint sets (if $\langle \Gamma_1,\Gamma_2 \rangle$ and $\langle \Gamma_3,\Gamma_4 \rangle$ are two different pairs in $H$ then there is no element $x \in (\bigcap \Gamma_1-\bigcup \Gamma_2) \cap (\bigcap \Gamma_3-\bigcup \Gamma_4)$. Otherwise, take $A \in \Gamma_1-\Gamma_3$ (or vice versa). So $A \in \Gamma_4$. Hence, $x \in \bigcap \Gamma_1 \subseteq A$ and $x \notin \bigcup \Gamma_4 \supseteq A$, a contradiction).

The implication $(2) \Rightarrow (1)$ is proved.

Since Clause $(3)$ is a private case of Clause $(2)$, it remains to prove $(3) \Rightarrow (2)$. Let $\Gamma_1,\Gamma_2$ be two non-empty disjoint subsets of $F$. We should prove Equality \ref{equality 1} for these $\Gamma_1$ and $\Gamma_2$, without assuming $\Gamma_1 \cup \Gamma_2=F$. Let $H$ be the set of disjoint pairs $\langle \Gamma_1^+,\Gamma_2^+ \rangle$ of $F$ such that $\Gamma_1 \subseteq \Gamma_1^+$, $\Gamma_2 \subseteq \Gamma_2^+$ and $\Gamma_1^+ \cup \Gamma_2^+=F$.

By Clause $(3)$, 
$$\sum_{\langle \Gamma_1^+,\Gamma_2^+ \rangle \in H} |\bigcap \Gamma_1^+-\bigcup \Gamma_2^+|=\sum_{\langle \Gamma_1^+,\Gamma_2^+ \rangle \in H} |\bigcap \Gamma_2^+-\bigcup \Gamma_1^+|.$$

So it remains to prove the following two equalities:
$$|\bigcap \Gamma_1-\bigcup \Gamma_2|=\sum_{\langle \Gamma_1^+,\Gamma_2^+ \rangle \in H} |\bigcap \Gamma_1^+-\bigcup \Gamma_2^+|$$ 
and
$$|\bigcap \Gamma_2-\bigcup \Gamma_1|=\sum_{\langle \Gamma_1^+,\Gamma_2^+ \rangle \in H} |\bigcap \Gamma_2^+-\bigcup \Gamma_1^+|,$$

Since their proofs are dual, we prove the first equality only.

$$\bigcap \Gamma_1-\bigcup \Gamma_2=\bigcup_{\langle \Gamma_1^+,\Gamma_2^+ \rangle \in H} (\bigcap \Gamma_1^+-\bigcup \Gamma_2^+)$$

(On the one hand, if $x \in \bigcap \Gamma_1-\bigcup \Gamma_2$ then for $\Gamma_1=\{A \in \Gamma:x \in A\}$ and $\Gamma_2=\{A \in \Gamma:x \notin A\}$, we have $x \in \bigcap \Gamma_1^+-\bigcup \Gamma_2^+$ and the pair $\langle \Gamma_1^+,\Gamma_2^+ \rangle$ belongs to $H$. On the other hand, if $x \in \bigcap \Gamma_1^+-\bigcup \Gamma_2^+$ for some $\langle \Gamma_1^+,\Gamma_2^+ \rangle \in H$ then $x \in \bigcap \Gamma_1^+ \subseteq \bigcap \Gamma_1$ and $x \notin \bigcup \Gamma_2^+ \supseteq \bigcup \Gamma_2$. Hence, $x \in \bigcap \Gamma_1-\bigcup \Gamma_2$). Therefore

$$|\bigcap \Gamma_1-\bigcup \Gamma_2|=\sum_{\langle \Gamma_1^+,\Gamma_2^+ \rangle \in H} |\bigcap \Gamma_1^+-\bigcup \Gamma_2^+|,$$
because it is a sum of disjoint sets.
\end{proof}

The following proposition eliminates the assumption that $F$ is a uniform set-system.
\begin{proposition}\label{elimination of relevant}
Clause (3) of Proposition \ref{the theorem before 23.3.16} implies that $F$ is a uniform set-system. 
\end{proposition}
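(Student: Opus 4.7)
The plan rests on the observation that the $(3)\Rightarrow(2)$ portion of the proof of Proposition \ref{the theorem before 23.3.16} never actually invokes the hypothesis that $F$ is uniform. First I would re-examine that sub-argument to confirm this: given a non-empty disjoint pair $\Gamma_1,\Gamma_2\subseteq F$, one forms the family $H$ of all pairs $\langle \Gamma_1^+,\Gamma_2^+\rangle$ with $\Gamma_1\subseteq\Gamma_1^+$, $\Gamma_2\subseteq\Gamma_2^+$, $\Gamma_1^+\cap\Gamma_2^+=\emptyset$ and $\Gamma_1^+\cup\Gamma_2^+=F$; Clause (3) is applied to each element of $H$; and the two sides of Equality \ref{equality 1} for $(\Gamma_1,\Gamma_2)$ are recovered by identifying $\bigcap \Gamma_1-\bigcup \Gamma_2$ (resp.\ $\bigcap \Gamma_2-\bigcup \Gamma_1$) with the disjoint union over $H$ of the sets $\bigcap \Gamma_1^+-\bigcup \Gamma_2^+$ (resp.\ $\bigcap \Gamma_2^+-\bigcup \Gamma_1^+$). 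None of these manipulations uses cardinalities of elements of $F$, so Clause (2) for $F$ follows from Clause (3) for $F$ with no uniformity hypothesis.

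Having extracted Clause (2) in this strengthened form, I would then dispose of uniformity directly. The case $|F|\leq 1$ is trivial, so assume $|F|\geq 2$ and pick any two distinct $A,B\in F$. Apply Clause (2) to the singleton pair $\Gamma_1=\{A\}$, $\Gamma_2=\{B\}$: Equality \ref{equality 1} then reduces to $|A-B|=|B-A|$, and adding $|A\cap B|$ to each side gives $|A|=|B|$. Since $A,B$ were arbitrary, all members of $F$ share a common cardinality, so $F$ is uniform.

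The hard part, such as it is, is the bookkeeping verification that the $(3)\Rightarrow(2)$ argument is indeed uniformity-free; after that the proof collapses to a one-line application of Equality \ref{equality 1} to singleton partitions. One might alternatively attempt a self-contained argument that bypasses Clause (2) and works directly from Clause (3), but since the partitions provided by Clause (3) must cover all of $F$, comparing $|A|$ to $|B|$ for arbitrary $A,B\in F$ with $|F|>2$ requires exactly the kind of summing-over-refinements trick already present in $(3)\Rightarrow(2)$, so reusing that step is strictly cleaner than duplicating it.
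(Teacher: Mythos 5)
Your proof is correct, but it takes a genuinely different route from the paper's. The paper argues directly from Clause (3): it sets $\alpha=\tfrac{1}{2}(|\bigcup F|+|\bigcap F|)$, decomposes $\bigcup F-\bigcap F$ and $D-\bigcap F$ as disjoint unions of the sets $\bigcap \Gamma_1-\bigcup \Gamma_2$ over partitions $\{\Gamma_1,\Gamma_2\}$ of $F$, uses Clause (3) to equate the two halves of the resulting sum (the partitions with $D\in\Gamma_1$ versus $D\notin\Gamma_1$), and concludes $|D|=\alpha$ for every $D\in F$. You instead observe that the $(3)\Rightarrow(2)$ portion of the proof of Proposition \ref{the theorem before 23.3.16} is uniformity-free — which is true; that sub-argument is pure set-theoretic bookkeeping over the refinements $\langle\Gamma_1^+,\Gamma_2^+\rangle$ and never touches cardinalities of members of $F$ — and then you get uniformity in one line by applying Equality \ref{equality 1} to the singletons $\Gamma_1=\{A\}$, $\Gamma_2=\{B\}$, yielding $|A-B|=|B-A|$ and hence $|A|=|B|$. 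Your route is more economical: the summing-over-partitions work is done once (in $(3)\Rightarrow(2)$) rather than duplicated, and the final step is transparent. What it costs is a mild restructuring: as written in the paper, the $(3)\Rightarrow(2)$ implication lives inside a proposition whose hypothesis includes uniformity, so to use it here you must extract it as a standalone lemma without that hypothesis, exactly as you propose. The paper's version, by contrast, is self-contained and has the minor side benefit of explicitly identifying the common cardinality as $\tfrac{1}{2}(|\bigcup F|+|\bigcap F|)$. Both arguments are valid; neither has a gap (beyond the finiteness assumptions implicit throughout the paper).
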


\begin{proof}
Define $$\alpha=\frac{|\bigcup F|+|\bigcap F|}{2}.$$ Let $D \in F$. We prove that $|D|=\alpha$. Let $P$ denote the family of partitions $\{\Gamma_1,\Gamma_2\}$ of $F$ into two non-empty subsets.  

Every element in $\bigcup F$ is in $\bigcap \Gamma_1-\bigcup \Gamma_2$ for some partition $\{\Gamma_1,\Gamma_2\} \in P$ or in $\bigcap F$.

Let $$P_1=\{\{\Gamma_1,\Gamma_2\} \in P:D \in \Gamma_1\}$$ and $$P_2=\{\{\Gamma_1,\Gamma_2\} \in P:D \notin \Gamma_1\}.$$ 

Define $$x=\sum_{\{\Gamma_1,\Gamma_2\} \in P_1}|\bigcap \Gamma_1-\bigcup \Gamma_2|$$
and
$$y=\sum_{\{\Gamma_1,\Gamma_2\} \in P_2}|\bigcap \Gamma_1-\bigcup \Gamma_2|.$$

By Clause $(3)$ of Proposition \ref{the theorem before 23.3.16}, we have $x=y$.

It is easy to check the following three equalities:
\begin{enumerate}
\item $|\bigcup F|=x+y+|\bigcap F|=2x+|\bigcap F|$, \item
$|D|=x+|\bigcap F|$ and \item
$|\bigcup F|+|\bigcap F|=2\alpha$ (by the definition of $\alpha$).
\end{enumerate}

By Equalities (1)-(3), $|D|=\alpha$. Since $D$ is an arbitrary set in $F$, $F$ is a uniform set-system. 
\end{proof}

\begin{theorem}\label{equivalent formulations of a KE set-system}\label{equivalent definitions of a KE set-system}\label{equivalent definitions of an HKE set-system}
Let $F$ be a set-system. 

The following are equivalent:
\begin{enumerate}
\item $F$ is an HKE set-system. \item Equality \ref{equality 1} holds for every two non-empty disjoint sub-set-systems, $\Gamma_1,\Gamma_2$ of $F$, \item Equality \ref{equality 1} holds for every two non-empty disjoint sub-set-systems, $\Gamma_1,\Gamma_2$ of $F$ with $\Gamma_1 \cup \Gamma_2=F$.
\end{enumerate}
\end{theorem}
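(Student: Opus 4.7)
The plan is to derive this theorem directly by combining the two results that precede it: Proposition \ref{the theorem before 23.3.16}, which gives the same equivalence under the extra hypothesis that $F$ is uniform, and Proposition \ref{elimination of relevant}, which shows that Clause (3) already forces $F$ to be uniform. Since the theorem drops exactly the uniformity assumption, the work has all been done; only the bookkeeping remains.

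First I would prove $(1) \Rightarrow (2)$. Assume $F$ is HKE. By Proposition \ref{1}, $F$ is automatically uniform, so Proposition \ref{the theorem before 23.3.16} applies; its implication $(1) \Rightarrow (2)$ immediately yields Equality \ref{equality 1} for all non-empty disjoint $\Gamma_1, \Gamma_2 \subseteq F$.

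Next, $(2) \Rightarrow (3)$ is immediate because (3) is the restriction of (2) to pairs satisfying $\Gamma_1 \cup \Gamma_2 = F$.

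Finally, for $(3) \Rightarrow (1)$ I would invoke Proposition \ref{elimination of relevant} to conclude from (3) alone that $F$ is uniform, and then apply the implication $(3) \Rightarrow (1)$ of Proposition \ref{the theorem before 23.3.16}, which is now available because its uniformity hypothesis is satisfied. This closes the cycle. There is no real obstacle here; the only subtlety is recognizing that the role of Proposition \ref{elimination of relevant} is precisely to let us bootstrap from the combinatorial condition (3) to the structural hypothesis (uniformity) needed by the earlier proposition.
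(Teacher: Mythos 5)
Your proposal is correct and follows essentially the same route as the paper: both reduce the theorem to Proposition \ref{the theorem before 23.3.16} by using Proposition \ref{1} to get uniformity from Clause (1) and Proposition \ref{elimination of relevant} to get uniformity from Clause (3). Arranging the argument as the cycle $(1)\Rightarrow(2)\Rightarrow(3)\Rightarrow(1)$ rather than the paper's ``each clause implies uniformity'' phrasing is only a cosmetic difference.
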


\begin{proof}
By Proposition \ref{the theorem before 23.3.16}, it is enough to prove that each clause implies that $F$ is a uniform set-system. By Proposition \ref{1}, Clause $(1)$ implies that $F$ is a uniform set-system. By Proposition \ref{elimination of relevant} Clause $(3)$ implies that $F$ is a uniform set-system. But Clause $(2)$ implies Clause $(3)$.
\end{proof}

\begin{corollary}\label{corollary 2.6}
Let $G$ be a graph. The following are equivalent:
\begin{enumerate}
\item $G$ is a KE graph. \item For some non-empty HKE set-system $F \subseteq \Omega(G)$, there is a matching $M:V[G]-\bigcup F \to \bigcap F$ and Equality \ref{equality 1} holds for every two non-empty disjoint sub-set-systems, $\Gamma_1,\Gamma_2$ of $F$. \item For some non-empty HKE set-system $F \subseteq \Omega(G)$, there is a matching $M:V[G]-\bigcup F \to \bigcap F$ and Equality \ref{equality 1} holds for every two non-empty disjoint sub-set-systems, $\Gamma_1,\Gamma_2$ of $F$ with $\Gamma_1 \cup \Gamma_2=F$.
\end{enumerate}

\end{corollary}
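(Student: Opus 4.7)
The plan is to derive this corollary as a direct combination of Theorem \ref{the main theorem of dam} and Theorem \ref{equivalent formulations of a KE set-system}. Once these are in hand, very little remains except keeping track of which clause says what.

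For $(1) \Rightarrow (2)$, I would start from the assumption that $G$ is a KE graph. Theorem \ref{the main theorem of dam} then supplies an HKE set-system $F$ comprised of maximum independent sets (so $F \subseteq \Omega(G)$) together with a matching $M:V(G)-\bigcup F \to \bigcap F$. Since $F$ is HKE, the implication $(1) \Rightarrow (2)$ of Theorem \ref{equivalent formulations of a KE set-system} delivers Equality \ref{equality 1} for every two non-empty disjoint sub-set-systems $\Gamma_1,\Gamma_2$ of $F$, which is exactly Clause $(2)$.

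The step $(2) \Rightarrow (3)$ is immediate because the condition in $(3)$ is a special case of the condition in $(2)$ (restricting to disjoint pairs whose union is $F$).

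For $(3) \Rightarrow (1)$, I would note that the hypothesis of $(3)$ already hands us an HKE set-system $F \subseteq \Omega(G)$ (i.e., comprised of maximum independent sets) together with a matching $M:V(G)-\bigcup F \to \bigcap F$; the additional Equality \ref{equality 1} clause is redundant by Theorem \ref{equivalent formulations of a KE set-system} but causes no harm. Applying Theorem \ref{the main theorem of dam} to this $F$ and $M$ directly yields that $G$ is a KE graph. There is no real obstacle here, since the genuine combinatorial work has already been done in Proposition \ref{the theorem before 23.3.16} and Proposition \ref{elimination of relevant}; the only thing to be careful about is matching notation between the two theorems being combined.
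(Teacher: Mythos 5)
Your proposal is correct and follows exactly the paper's route: the paper's entire proof is the one-line citation of Theorem \ref{the main theorem of dam} together with Theorem \ref{equivalent formulations of a KE set-system}, and your write-up simply makes that combination explicit, clause by clause. Nothing is missing and nothing differs in substance.
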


\begin{proof}
By Theorem \ref{equivalent formulations of a KE set-system} and Theorem \ref{the main theorem of dam}.
\end{proof}

\bibliographystyle{amsplain}
\bibliography{..//..//..//lit}

\providecommand{\bysame}{\leavevmode\hbox to3em{\hrulefill}\thinspace}
\providecommand{\MR}{\relax\ifhmode\unskip\space\fi MR }
\providecommand{\MRhref}[2]{%
  \href{http://www.ams.org/mathscinet-getitem?mr=#1}{#2}
}
\providecommand{\href}[2]{#2}
\begin{thebibliography}{1}

\bibitem{broken}
Adi Jarden, \emph{The first time ke is broken up}, arXiv preprint
  arXiv:1603.06887 (2016).

\bibitem{hke}
\bysame, \emph{Hereditary konig egervary collections}, arXiv preprint
  arXiv:1603.06552 (2016).

\bibitem{dam}
Adi Jarden, Vadim~E Levit, and Eugen Mandrescu, \emph{Two more characterization
  of konig egervary graphs}, Submitted.

\bibitem{jlm}
\bysame, \emph{Monotonic properties of collections of maximum independent sets
  of a graph}, arXiv preprint arXiv:1506.00249 (2015).

\end{thebibliography}






\end{document}